\DeclareMathOperator{\red}{red}
\newtheorem{thm}{Theorem}[section]
\newtheorem{deff}[thm]{Definition}
\newtheorem{lem}[thm]{Lemma}
\definecolor{blue}{rgb}{0,0,1}
{}
\begin{document}

\begin{center}
{\large \bf  On a Greedy Algorithm to Construct Universal \\[5pt]
	  Cycles for Permutations}
\end{center}

\begin{center}
Alice L.L. Gao$^{1}$,
Sergey Kitaev$^{2}$, 
Wolfgang Steiner$^{3}$
and Philip B. Zhang$^{4}$\\[6pt]

$^{1}$Department of Applied Mathematics\\
Northwestern Polytechnical University,
Xi’an, Shaanxi 710072, P.R. China\\[6pt]

$^{2}$Department of Computer and Information Sciences \\
University of Strathclyde, 26 Richmond Street, Glasgow G1 1XH, UK\\[6pt]

$^{3}$IRIF, CNRS UMR 8243 \\
Universit\'e Paris Diderot -- Paris 7, 75205 Paris Cedex 13, Paris, France\\[6pt] 

$^{4}$College of Mathematical Science \\
Tianjin Normal University, Tianjin  300387, P. R. China\\[6pt]

Email: $^{1}${\tt llgao@nwpu.edu.cn},
	   $^{2}${\tt sergey.kitaev@cis.strath.ac.uk},
           $^{3}${\tt steiner@irif.fr},
           $^{4}${\tt zhang@tjnu.edu.cn}
\end{center}

\noindent\textbf{Abstract.}
A universal cycle for permutations of length $n$ is a cyclic word or permutation, any factor of which is order-isomorphic to exactly one permutation of length $n$, and containing all permutations of length $n$ as factors. It is well known that universal cycles for permutations of length $n$ exist. However, all known ways to construct such cycles are rather complicated. For example, in the original paper establishing the existence of the universal cycles, constructing such a cycle involves finding an Eulerian cycle in a certain graph and then dealing with partially ordered sets.

In this paper, we offer a simple way to generate a universal cycle for permutations of length $n$, which is based on applying a greedy algorithm to a permutation of length $n-1$. We prove that this approach gives a unique universal cycle $\Pi_n$ for permutations, and we study properties of $\Pi_n$. \\

\noindent {\bf Keywords:}  universal cycles; combinatorial generation; greedy algorithm; permutations\\

\noindent {\bf AMS Subject Classifications:}  05A05

\section{Introduction}\label{intro}

A {\em universal cycle}, or {\em u-cycle}, for a class of combinatorial objects is a cyclic word containing each object, encoded by a word, exactly once as a cyclic interval (called a {\em factor}). Universal cycles were introduced by  Chung, Diaconis and Graham in~\cite{CDG1992}. However, the origin of the notion of a u-cycle is in the study of the celebrated de Bruijn cycles, which are cyclic words containing each word of length $n$ over some alphabet, for a given $n$, exactly once (see~\cite{deBruijn}). 

There are many ways to construct a de Bruijn cycle. A ``classical'' way to construct a de Bruijn cycle is via the notion of a {\em de Bruijn graph of order}~$n$, which is a directed graph representing overlaps between words of length~$n$. It can be easily shown that any de Bruijn graph is {\em balanced} and {\em strongly connected}, and thus it contains an {\em Eulerian cycle}. Moreover, one can show that a de Bruijn graph of order~$n$ is the {\em line graph} of the de Bruijn graph of order $n-1$, and thus any de Bruijn graph contains a {\em Hamiltonian cycle}, which can easily be  translated to a de Bruijn cycle.

\subsection{Martin's algorithm to generate de Bruijn sequences}
An alternative construction of a de Bruijn sequence involves concatenating together, in lexicographic order, all the {\em Lyndon words} whose length divides $n$. An interesting, and most relevant to our paper fact is that this sequence was first generated by Martin~\cite{Martin1934} in 1934 using the following greedy algorithm. 

\medskip

\begin{mdframed}
\centerline{\bf Martin's greedy algorithm to generate a de Bruijn sequence}

\medskip
Start with the word $(k-1)^{n-1}$ and then repeatedly apply the following rule: Append the {\em smallest} letter in $\{0,1,\ldots,k-1\}$ so that factors of length $n$ in the resulting word are distinct. Once no more extension is possible, remove the $n-1$ rightmost letters.
\end{mdframed}

For example, for $k=3$ and $n=2$, the steps of the algorithm are: $2\rightarrow 20 \rightarrow 200 \rightarrow 2001 \rightarrow 20010 \rightarrow 200102 \rightarrow 2001021 \rightarrow 20010211 \rightarrow 200102112\rightarrow 20010211$.

\subsection{Universal cycles for permutations}
In this paper, permutations of length $n$ are called {\em $n$-permutations}.

Universal cycles for permutations were one of the main objects considered in~\cite{CDG1992}. A~universal cycle for $n$-permutations is a cyclic word, each length $n$ factor of which is order-isomorphic to a unique $n$-permutation, and every $n$-permutation is order-isomorphic to some factor in the word. There is a long line of research on universal cycles for permutations, e.g.\  see \cite{AW2009,HRW2010,HRW2012,HI1996,I2006,J1993,Johnson2009,RW2010}. 

Even though the basic original idea to construct universal cycles for permutations is similar to that of constructing de Bruijn cycles via de Bruijn graphs, the situation with permutations is much more complicated. Indeed, just proving the existence of such a universal cycle uses a not so trivial trick of {\em clustering} the {\em graph of non-overlapping permutations}, an analogue of the de Bruijn graph in the case of permutations. The actual construction of universal cycles for permutations in~\cite{CDG1992} involved dealing with partially ordered sets and resulted in a nice conjecture on the minimal alphabet that was settled 17 years later by Johnson in~\cite{Johnson2009}. Namely, it turns out that it is possible to construct a universal cycle for $n$-permutations using just $n+1$ letters; using $n$ letters is impossible (unless using the {\em shorthand encoding of $n$-permutations by $(n-1)$-permutations, which is deleting the rightmost element in each $n$-permutation; see~\cite{HRW2012}}) as can be easily seen, e.g.\ in the case of $n=3$. 

We found the existent constructions of the universal cycles for permutations in the literature to be rather involved, and we asked ourselves whether the simple algorithm by Martin for de Bruijn cycles could be extended to the case of the permutations. We were pleased to discover that the answer to the question was positive with surprisingly small changes that were required to Martin's algorithm: instead of the initial word  $(k-1)^{n-1}$ we use the increasing permutation of length $n-1$, and instead of the smallest available letter on a given iteration of the process we use the smallest available extension of the respective permutation, to be defined in Section~\ref{gener-sec}. Interestingly, beginning with a monotone permutation, that is, the increasing or decreasing permutation of appropriate length, is a necessary condition in our greedy algorithm, as we show in Section~\ref{unique-sub}.

\subsection{Basic definitions}
For a permutation, or word, $\pi$, the {\em reduced form} of $\pi$, denoted $\red(\pi)$, is obtained by replacing the $i$-th smallest element in $\pi$ by $i$. For example, $\red(3275)=2143$. Any $i$ consecutive letters of a word or permutation $w$ form a {\em factor} of $w$. If $w$ is a cyclic word then a factor can begin at the end of $w$ and end at the beginning of $w$. If $u$ is a factor of $w$, we also say that $w$ {\em covers} $\red(u)$.

\begin{deff} A word $\Pi'_n$ is a {\em universal word}, or {\em u-word}, for $n$-permutations if $\Pi'_n$ covers {\em every} $n$-permutation {\em exactly once}.
\end{deff}

\begin{deff} A cyclic word $\Pi_n$ is a {\em universal cycle}, or {\em u-cycle}, for $n$-permutations if $\Pi_n$ covers {\em every} $n$-permutation {\em exactly once}.
\end{deff}

\subsection{Organization of the paper} This paper is organized as follows. In Section~\ref{gener-sec} we show our simple way to generate the words $\Pi'_n$ and $\Pi_n$, and in Sections~\ref{just-1} and~\ref{just-2} we justify that the constructions of $\Pi'_n$ and $\Pi_n$ work. Also, in Section~\ref{unique-sub} we show that our greedy algorithm to generate $\Pi'_n$ and $\Pi_n$ can only be applied to the increasing permutation of length $n-1$. Further, Section~\ref{prop-sec} discusses some properties of $\Pi'_n$ and $\Pi_n$, and Section~\ref{further-res-sec} gives several directions of future research. This includes a discussion on reducing the alphabets of $\Pi'_n$ and $\Pi_n$ in Section~\ref{reducing-sec} and a possibility of extending our approach to relevant object in the literature in Section~\ref{sec-shortened}.

\section{Simple generation of u-words and u-cycles for permutations}

\subsection{Generating $\Pi'_n$ and $\Pi_n$}\label{gener-sec}

Let $\pi = \pi_1\pi_2\cdots \pi_m$ be a permutation of $m$ distinct integers. 
Then the {\em $i$-th extension} of $\pi$ to the right, $1\leq i\leq m$, is the permutation  
$$c_b(\pi_1)c_b(\pi_2)\cdots c_b(\pi_m)b,$$ 
where $b$ is the $i$-th smallest element in $\{\pi_1,\pi_2,\ldots,\pi_m\}$, and
$$c_b(x)=
\begin{cases}
x & \mbox{if }x<b, \\
x+1 & \mbox{if }x\geq b.
\end{cases}
$$
The {\em $(m+1)$-st extension} is the permutation $\pi b$, where $b$ is the largest element in $\{\pi_1+1,\pi_2+1,\ldots,\pi_m+1\}$. We call the first extension the {\em smallest extension}, and the $(m+1)$-st extension the {\em largest extension} of~$w$.

The following simple algorithm produces recursively a universal word $\Pi'_n$ of length $n!+n-1$ for $n$-permutations.

\medskip

\begin{mdframed}
\centerline{\bf The greedy algorithm to construct $\Pi'_n$}

\medskip

Begin with the increasing permutation
$$\Pi'_{n,0}:=12\cdots(n-1).$$
Suppose that a permutation
$$\Pi'_{n,k}=a_{1}a_{2}\cdots a_{k+n-1}$$
has been constructed for $0 \leq k < n!$, and no two factors in $\Pi'_{n,k}$ of length $n$ are order-isomorphic. Let $i$ be minimal such that no factor of length~$n$ in $\Pi'_{n,k}$ is order-isomorphic to the $i$-th extension of $a_{k+1}a_{k+2}\cdots a_{k+n-1}$, and denote the last element of this extension by~$b$.
Then 
$$\Pi'_{n,k+1}:=c_b(a_1)c_b(a_2)\cdots c_b(a_{k+n-1})b.$$
For some $k^*$, no extension of $\Pi'_{n,k^*}$ will be possible without creating a factor order-isomorphic to a factor in $\Pi'_{n,k}$. The greedy algorithm then terminates and outputs $\Pi'_n:=\Pi'_{n,k^*}$.
\end{mdframed}

\medskip\noindent
For example, the steps of the algorithm for $n=3$ are as follows:
$$12  \rightarrow
 231 \rightarrow
 3421 \rightarrow
  45312 \rightarrow
 564132 \rightarrow  6751324 \rightarrow
 78613245 = \Pi'_3.$$
We note that for each $k$,  $\Pi'_{n,k}$ is a permutation of $\{1,2,\ldots,k+n-1\}$.

The following simple extension of the greedy algorithm turns the universal word for permutations $\Pi'_n$ into a universal cycle for permutations $\Pi_n$.

\medskip
\begin{mdframed}
\centerline{\bf Generating the u-cycle $\Pi_n$ from $\Pi'_n$}

\medskip

Remove the last $n-1$ elements in $\Pi'_n$ and take the reduced form of the resulting sequence to obtain $\Pi_n$.
\end{mdframed}

\medskip\noindent
For example, $\Pi_3$ is given by
$$\Pi'_3=78613245 \rightarrow 786132\rightarrow \red(786132)= 564132=\Pi_3.$$
For another example, $\Pi_4$ is given by
$$22\ 23\ 24\ 21\ 20\ 18\ 19\ 3\ 17\ 4\ 2\ 16\ 1\ 6\ 7\ 5\ 11\ 10\ 8\ 13\ 9\ 12\ 15\ 14.$$

\subsection{Justification of $\Pi'_n$ being a u-word for permutations}\label{just-1}
In the following, let $n$ be arbitrary but fixed.
For $\Pi'_{n,k} = a_{1}a_{2}\cdots a_{k+n-1}$, set
$$\sigma_{k} := \red(a_{k}a_{k+1}\cdots a_{k+n-1}), \qquad \sigma'_{k} := \red(a_{k+1}a_{k+2}\cdots a_{k+n-1}),$$
and 
$$J_{k} = |\{j\le k:\, \sigma'_{j} = \sigma'_{k}\}|,$$
i.e.\ $J_{k}$ is the number of occurrences of the $(n-1)$-permutation $\sigma'_{k}$ in~$\Pi'_{n,k}$.
By the definition of the greedy algorithm, all $i$-th extensions of~$\sigma'_{k}$ with $i < J_k$ occur in $\Pi'_{n,k}$, and the $J_k$-th extension of $\sigma'_{k}$ does not occur in~$\Pi'_{n,k}$.
Therefore, the greedy algorithm terminates at~$k$ if and only if $J_k = n+1$.
If $J_k \le n$, then $\sigma_{k+1}$ is the $J_k$-th extension of $\sigma'_{k}$ (and ends with~$J_k$).

\begin{lem}\label{lem2}
The greedy algorithm terminates at~$k$ if and only if $\sigma_{k} {\,=\,}12\cdots n$.
\end{lem}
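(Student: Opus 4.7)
My plan is to prove both directions separately, exploiting the duality between the \emph{right-extensions} of an $(n-1)$-permutation $\pi$ (the $n$ permutations $\tau$ with $\red(\tau_1\cdots\tau_{n-1})=\pi$, which the greedy rule manipulates directly) and its \emph{left-extensions} (the $n$ permutations $\tau$ with $\red(\tau_2\cdots\tau_n)=\pi$). Observe that $\sigma_j$ is always a left-extension of $\sigma'_j$, since reducing the last $n-1$ letters of $\sigma_j$ recovers $\sigma'_j$. For the easy direction, assume $\sigma_k=12\cdots n$. Then its first and last $n-1$ letters both reduce to $12\cdots(n-1)$, so $\sigma'_{k-1}=\sigma'_k=12\cdots(n-1)$. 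Since the reduced last letter of $\sigma_k$ is $n$, $\sigma_k$ is the $n$-th (largest) right-extension of $\sigma'_{k-1}$, which by the remarks preceding the lemma means $J_{k-1}=n$ and hence $J_k=J_{k-1}+1=n+1$, so the algorithm terminates at $k$.

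For the converse, suppose the algorithm terminates at $k$, so $J_k=n+1$, and enumerate the positions $j_0<j_1<\cdots<j_n=k$ at which $\sigma'_{j_i}=\sigma'_k$. For every $j_i\ge 1$, the factor at position $j_i$ contributes the left-extension $\sigma_{j_i}$ of $\sigma'_k$, and these $\sigma_{j_i}$ are pairwise distinct because $\Pi'_{n,k}$ has no two order-isomorphic length-$n$ factors. Since $\sigma'_k$ admits only $n$ left-extensions, we cannot have $j_0\ge 1$; therefore $j_0=0$, and $\sigma'_k=\sigma'_0=12\cdots(n-1)$. A short induction on the occurrence index then shows that the greedy rule assigns the right-extensions of $12\cdots(n-1)$ in the order $1,2,\ldots,n$ at steps $j_0+1,\ldots,j_{n-1}+1$. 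The $n$-th right-extension of $12\cdots(n-1)$ is $12\cdots n$, so $\sigma_{j_{n-1}+1}=12\cdots n$; reducing its last $n-1$ letters gives $12\cdots(n-1)$, so $j_{n-1}+1$ is itself an occurrence of $\sigma'_k$, and being strictly greater than $j_{n-1}$ it must equal $j_n=k$. Hence $\sigma_k=12\cdots n$.

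The main obstacle I expect is the pigeonhole step that forces $\sigma'_k=12\cdots(n-1)$: the head-count is tight ($n+1$ occurrences versus only $n$ left-extensions), and the argument succeeds only because the initial position $j=0$ does not produce a left-extension, providing exactly the one ``free'' slot needed. Everything else is bookkeeping that follows cleanly from the greedy rule.
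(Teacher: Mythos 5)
Your proof is correct, and its engine is the same as the paper's: every occurrence of $\sigma'_k$ at a position $j \ge 1$ is preceded by a letter and therefore contributes a distinct length-$n$ factor $\sigma_j$ extending $\sigma'_k$ to the left, and there are only $n$ such left-extensions, so position $0$ is the single free slot. The forward direction is identical in both proofs. The difference lies in how the converse is closed. The paper proves the contrapositive: assuming $\sigma_k \ne 12\cdots n$ (whence $\sigma_j \ne 12\cdots n$ for all $j \le k$, as the algorithm would otherwise have stopped earlier), it splits into the cases $\sigma'_k \ne 12\cdots(n-1)$ (no occurrence at $j=0$) and $\sigma'_k = 12\cdots(n-1)$ (the left-extension $12\cdots n$ is unavailable), concluding $J_k \le n$ either way. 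You argue directly from $J_k = n+1$: the pigeonhole forces $j_0 = 0$ and $\sigma'_k = \sigma'_0 = 12\cdots(n-1)$, and you then track the greedy rule along the occurrences $j_0 < \cdots < j_{n-1}$, noting $J_{j_i} = i+1 \le n$ (so termination cannot intervene and the $(i+1)$-st extension is produced), so that the $n$-th occurrence triggers the $n$-th extension $12\cdots n$ at position $j_{n-1}+1$, whose suffix makes that position an occurrence again and hence forces $j_{n-1}+1 = j_n = k$. Your route costs a little extra bookkeeping but yields slightly more information: it pinpoints that $12\cdots n$ arises exactly as the terminal factor, created immediately after the $n$-th visit to $12\cdots(n-1)$, whereas the paper's contrapositive is leaner, substituting a two-case counting argument (and the exclusion of $12\cdots n$ among the left-extensions) for your extension-tracking. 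Both arguments are complete; there is no gap in yours.
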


\begin{proof}
If $\sigma_{k} = 12\cdots n$, then $\sigma'_{k-1} = \sigma'_{k} = 12\cdots(n-1)$ and $J_{k-1} = n$. Thus, $J_k = n+1$, and the greedy algorithm terminates at~$k$.

For the converse, assume that $\sigma_{k} \ne 12\cdots n$. 
By the preceding paragraph, this implies that $\sigma_{j} \ne 12\cdots n$ for all $j \le k$. 
Since each word $a_{j+1} \cdots a_{j+n-1}$, $1 \le j \le k$, is preceded by the letter $a_{j}$ and the permutation $\sigma_j$ occurs only at the position~$j$, we have $\sigma'_j = \sigma'_k$ for at most $n$ different indices $j \ge 1$.
If $\sigma'_{k} \ne 12\cdots (n-1)$, then we have $\sigma'_{0} \ne \sigma'_{k}$ and thus $J_k \le n$. 
If $\sigma'_{k} = 12\cdots (n-1)$, then we have $\sigma'_j = \sigma'_k$ for at most $n-1$ different indices $j \ge 1$ (because $\sigma_{j} = 12\cdots n$ is not possible), which also gives that $J_k \le n$. 
Therefore, the greedy algorithm does not terminate at~$k$ if $\sigma_{k} \ne 12\cdots n$.
\end{proof}

\begin{lem}\label{lem3}
$\Pi'_n$ covers all $n$-permutations.
\end{lem}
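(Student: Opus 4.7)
The plan is to reinterpret the greedy construction as a walk in a directed graph $G$ in the spirit of a de Bruijn graph. Let $V(G)$ be the set of $(n-1)$-permutations, and for every $n$-permutation $\pi$ add an edge from $\red(\pi_1\cdots \pi_{n-1})$ to $\red(\pi_2\cdots \pi_n)$. Then $|V(G)|=(n-1)!$, $|E(G)|=n!$, and every vertex has in-degree and out-degree equal to $n$, corresponding to the $n$ distinct extensions of an $(n-1)$-permutation on either side. Under this translation, the greedy algorithm becomes a walk in $G$ starting at $u=12\cdots(n-1)$; by the definition of $J_k$, on its $i$-th visit to a vertex $\tau$ the algorithm traverses the $i$-th extension edge out of $\tau$. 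By Lemma~\ref{lem2} the walk terminates precisely when $u$ is reached for the $(n+1)$-st time, having exhausted all $n$ out-edges of $u$.

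Proving Lemma~\ref{lem3} is then equivalent to showing that this greedy closed walk is an Eulerian circuit of $G$. Writing $m_\tau$ for the number of visits to $\tau$, we have $m_u=n+1$ and $m_\tau\le n$ for $\tau\ne u$. Since the walk is closed, at every vertex the used in-degree equals the used out-degree; hence the subgraph $G'\subseteq G$ of unused edges is balanced at every vertex and decomposes into edge-disjoint directed cycles. Moreover, all $n$ out-edges at $u$ being used forces all $n$ in-edges at $u$ to be used, so $u$ lies on no cycle of $G'$. The lemma therefore reduces to proving that $G'$ is empty.

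To rule out a non-empty $G'$, I would argue by contradiction. Suppose a directed cycle $C$ of unused edges exists, and pick a vertex $\tau$ on $C$; then $m_\tau<n$, and by greediness the unused out-edges at $\tau$ are exactly its $(m_\tau+1)$-st through $n$-th extensions. Using strong connectivity of $G$ (which is available from the clustering analysis of \cite{CDG1992}) together with the fact that $u\notin V(G')$, one can find a connection between the already-traversed walk and the cycle $C$; splicing $C$ into the walk at a common vertex should then exhibit either a step at which the greedy rule would have chosen a strictly smaller-indexed extension than it actually did, or an extra visit to $u$ contradicting the termination condition supplied by Lemma~\ref{lem2}.

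The main obstacle is this final splicing step, where the purely structural existence of $C$ must be converted into a concrete violation of the ``smallest unused extension'' rule. In Martin's original proof for de Bruijn sequences, the analogous step is almost free because the underlying de Bruijn graph has strong vertex- and edge-transitivity; here, by contrast, the out-edges at $\tau$ are labeled by the rank of the appended letter, and $G$ enjoys no comparable symmetry. Thus one needs to track carefully how extension indices transform along the edges of $C$ and across the splice point, and argue that at some vertex $\tau'$ along $C$ an extension of index at most $m_{\tau'}$ would be left unused — contradicting greediness. I expect this bookkeeping, converting ``unused cycle in $G'$'' into ``missed small extension upstream,'' to be the technical heart of the proof.
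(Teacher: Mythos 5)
Your reduction is sound as far as it goes: $G$ is the right graph, the greedy walk is closed (it starts and ends at $u=12\cdots(n-1)$), the unused edges form a balanced subgraph $G'$ that decomposes into edge-disjoint directed cycles avoiding $u$, and Lemma~\ref{lem3} is equivalent to $G'=\emptyset$. But the step that would actually prove this --- deriving a contradiction from a cycle $C\subseteq G'$ --- is exactly the step you leave open, and the mechanism you propose for it cannot work in the generality in which you describe it. Every structural fact you invoke (balance, closedness, all out-edges of the start vertex used, strong connectivity of $G$, the smallest-unused-extension rule) holds verbatim when the algorithm is started from an \emph{arbitrary} $(n-1)$-permutation: the counting argument in the proof of Lemma~\ref{lem2} still forces the walk to terminate back at its start vertex on the $(n+1)$-st visit, having used all of its out-edges. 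Yet Theorem~\ref{uniqueness-thm} shows that for every start other than $12\cdots(n-1)$ the resulting walk never traverses the edge corresponding to $12\cdots n$, i.e.\ $G'\neq\emptyset$ there. So ``balanced $+$ strongly connected $+$ closed greedy walk'' is fully consistent with a nonempty $G'$, and no generic splicing argument can refute it; any correct completion must use the specific start vertex $12\cdots(n-1)$, and your outline never identifies where that enters. There is also a secondary gap: a cycle of $G'$ need not share any vertex with the walk (you have not shown that every vertex is visited), so the common splice point you rely on may not exist.

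For comparison, the paper closes this gap with a short word-level iteration that makes the role of the start explicit: if $\pi_1\cdots\pi_n$ is uncovered, then at most $n-1$ of the $n$ permutations ending with the pattern $\red(\pi_2\cdots\pi_n)$ are covered, so by greediness (extensions of a given $(n-1)$-pattern are used in increasing order of index) the largest extension $\red(\pi_2\cdots\pi_n)\,n$ is also uncovered; iterating, $\red(\pi_k\cdots\pi_n)(n-k+2)\cdots n$ is uncovered for all $k$, and at $k=n$ this says $12\cdots n$ is uncovered, contradicting Lemma~\ref{lem2}, which guarantees that $12\cdots n$ is covered at termination. In your graph language, this iteration amounts to the observation that the ``largest extension'' edges lead from any vertex to $u$ in at most $n-1$ steps (they form an in-tree rooted at $u$), so an unused edge at any vertex propagates to an unused edge at $u$ itself, which is impossible. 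That in-tree property is precisely what distinguishes the increasing start from all others, and it (or something equivalent) is what your splicing step would have to establish; as written, your proposal is a correct reduction followed by an acknowledged hole at the technical heart of the argument.
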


\begin{proof}
We shall prove this lemma by contradiction.
Suppose that a permutation $\pi_1 \pi_2 \cdots \pi_{n}$ is not covered by $\Pi'_n$. 
Then $\pi_2 \cdots \pi_{n}$ is covered at most $n-1$ times, hence $\red(\pi_2 \cdots \pi_{n})n$ is not covered. 
More generally, we have for $1 \le k \le n$ that, if $\red(\pi_k \cdots \pi_{n}) (n-k+2) \cdots n$ is not covered, then 
$$\red(\red(\pi_{k+1} \cdots \pi_{n}) (n-k+2) \cdots n)n = \red(\pi_{k+1} \cdots \pi_{n}) (n-k+1) \cdots (n-1)n$$
is not covered.
We obtain that $\Pi'_n$ does not cover the permutation $1 2\cdots n$, contradicting Lemma~\ref{lem2}.
\end{proof}

\begin{thm}\label{u-word-thm} 
$\Pi'_n$ is a u-word for $n$-permutations.
\end{thm}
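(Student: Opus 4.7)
The plan is to combine Lemma~\ref{lem3} with the defining property of the greedy algorithm itself. By construction, at every step $k$ the algorithm only appends a letter if the resulting new length-$n$ factor is not order-isomorphic to any earlier length-$n$ factor of $\Pi'_{n,k}$. A trivial induction on $k$ then shows that no two length-$n$ factors of $\Pi'_{n,k}$ are order-isomorphic, and in particular this holds for the terminal word $\Pi'_n$. Hence each $n$-permutation is covered \emph{at most} once by $\Pi'_n$.

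For the other direction, Lemma~\ref{lem3} already supplies what is needed: every $n$-permutation is covered by $\Pi'_n$, i.e., \emph{at least} once. Combining the two bounds gives that each $n$-permutation is covered exactly once, which is precisely the definition of a u-word for $n$-permutations. As a byproduct, counting length-$n$ factors yields $|\Pi'_n| - n + 1 = n!$, so $|\Pi'_n| = n! + n - 1$, matching the length announced when the algorithm was introduced.

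Because all combinatorial content has already been absorbed into Lemmas~\ref{lem2} and~\ref{lem3}, there is essentially no obstacle here; the proof reduces to a one-line consequence of Lemma~\ref{lem3} together with the built-in non-repetition rule. The only subtlety worth articulating explicitly is to separate the two conceptually different reasons why ``factor $\leftrightarrow$ permutation'' holds in both directions: injectivity (no permutation appears twice) comes from the greedy rule, while surjectivity (every permutation appears) comes from Lemma~\ref{lem3}. Stating these two halves in sequence completes the argument.
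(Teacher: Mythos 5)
Your proposal is correct and follows exactly the paper's own argument: covering \emph{at most} once is immediate from the greedy rule (the paper phrases this as ``by the nature of the greedy algorithm''), and covering \emph{at least} once is Lemma~\ref{lem3}. The extra observation that $|\Pi'_n| = n! + n - 1$ is a valid bonus but not needed for the theorem.
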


\begin{proof}
By the nature of the greedy algorithm, $\Pi'_n$ cannot cover a permutation more than once. By Lemma~\ref{lem3}, $\Pi'_n$ covers all $n$-permutations.
\end{proof}

\subsection{Justification of \texorpdfstring{$\Pi_n$}{} being a u-cycle for permutations}\label{just-2}

\begin{thm}
 $\Pi_n$ is a u-cycle for $n$-permutations.
\end{thm}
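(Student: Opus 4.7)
The plan is to show that the $n!$ cyclic factors of $\Pi_n$ coincide, in reduced form, with the $n!$ straight factors of $\Pi'_n$ at matching positions; combined with Theorem~\ref{u-word-thm}, this will yield the u-cycle property. Let $N = n!$; then $|\Pi_n| = N$ since $|\Pi'_n| = N + n - 1$ and we delete $n - 1$ entries.

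For each position $i \in \{1, \ldots, N - n + 1\}$, the cyclic factor of $\Pi_n$ at $i$ does not wrap, and since reduction commutes with taking factors of a prefix, its reduced form equals that of the straight factor of $\Pi'_n$ at position $i$. This handles $N - n + 1$ factors.

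For the remaining $n - 1$ wrap-around positions, I would first prove by induction on $k$ the invariant that the first $n - 1$ entries of $\Pi'_{n, k}$ are exactly $k + 1, k + 2, \ldots, k + n - 1$ in increasing order, checking at each step that the appended letter $b$ is at most $k + 1$ (using the definition of $c_b$ and the fact that the first and last $n - 1$ positions are disjoint once $k \ge n - 1$). Applied at $k = N$, this gives $a_\ell = N + \ell$ for $\ell = 1, \ldots, n - 1$, hence the first $n - 1$ entries of $\Pi_n$ are $N - n + 2, N - n + 3, \ldots, N$. Consequently, a wrap-around cyclic factor of $\Pi_n$ at position $i = N - n + 1 + j$ (with $j \in \{1, \ldots, n - 1\}$) consists of $n - j$ interior entries, each at most $N - n + 1$, followed by $j$ strictly larger wrap entries that are themselves increasing, so its reduced form is $\red(a_i, \ldots, a_N)$ followed by the increasing block $n - j + 1, n - j + 2, \ldots, n$.

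The corresponding straight factor $\sigma_i = \red(a_i, \ldots, a_{N + j})$ of $\Pi'_n$ equals this iff $a_{N + 1}, \ldots, a_{N + j}$ are the $j$ largest values of $\{a_i, \ldots, a_{N + j}\}$ in increasing order; the increasing part is immediate from Lemma~\ref{lem2}, and by monotonicity the ``largest'' condition reduces to the key inequality $a_{N + 1} > \max(a_{N - n + 2}, \ldots, a_N)$. This inequality is the main obstacle. I would prove it by analysing the step $N - n + 2$ at which $a_{N + 1}$ is first appended; since $c_b$ preserves relative order at all subsequent steps, it suffices to verify the inequality at the moment of insertion, and this in turn holds precisely when the chosen extension index is $n$ (so that $b = \max + 1$), i.e.\ when $J_{N - n + 1} = n$. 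To establish $J_{N - n + 1} = n$, I would argue that $\sigma'_{N - n + 1} \neq 12 \cdots (n - 1)$ (otherwise $\sigma_{N - n + 2}$ would duplicate $\sigma_N = 12 \cdots n$, contradicting Theorem~\ref{u-word-thm}) and that $\sigma'_k \neq \sigma'_{N - n + 1}$ for each $k \in \{N - n + 2, \ldots, N\}$, so that all $n$ total occurrences of $\sigma'_{N - n + 1}$ as an $(n-1)$-factor of $\Pi'_n$ lie in $\{\sigma'_0, \ldots, \sigma'_{N - n + 1}\}$. Once this key inequality is in place, every cyclic factor of $\Pi_n$ matches a distinct straight factor of $\Pi'_n$, and hence $\Pi_n$ covers every $n$-permutation exactly once.
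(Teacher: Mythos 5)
Your overall architecture is sound and in fact runs parallel to the paper's own proof: you establish that the first $n-1$ letters of $\Pi'_n$ are its largest values in increasing order, that the last $n-1$ letters are increasing, and that the whole theorem reduces to the single inequality $a_{n!+1} > \max(a_{n!-n+2},\ldots,a_{n!})$, i.e.\ to the statement that $\sigma_{n!-n+2}$ ends with~$n$. Your handling of the non-wrapping factors, the first-block invariant, and your step (a) (that $\sigma'_{n!-n+1} \neq 12\cdots(n-1)$, since otherwise $\sigma_{n!-n+2}$ would equal $12\cdots n = \sigma_{n!}$, contradicting Theorem~\ref{u-word-thm}) are all correct, and your reformulation of the key inequality as $J_{n!-n+1}=n$, to be proved by locating all $n$ occurrences of $\sigma'_{n!-n+1}$, is a legitimate counting frame that the paper does not use explicitly.

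However, there is a genuine gap: the claim that $\sigma'_k \neq \sigma'_{n!-n+1}$ for all $k \in \{n!-n+2,\ldots,n!\}$ is asserted with no supporting argument, and this claim is precisely the hard core of the theorem; everything else in your plan is routine. It does not follow from Theorem~\ref{u-word-thm}, because $(n-1)$-factors of $\Pi'_n$ do repeat (each $(n-1)$-permutation occurs $n$ times as some $\sigma'_j$), and it is not visibly true on pattern grounds, because a priori $\sigma'_{n!-n+1}$ could itself end with a long increasing run and thus be consistent with a factor overlapping the increasing tail. The way to close the gap is exactly the paper's central argument: if $\sigma'_{n!-n+1} = \sigma'_k$ with $k = n!-n+1+d$, $1 \le d \le n-2$, then the order-isomorphism between the two factors transports the increasing run $a_{n!} < a_{n!+1} < \cdots < a_{n!+n-1}$ (which comes from Lemma~\ref{lem2} and termination at $n!$) to the positions $n!-d,\ldots,n!$, giving $a_{n!-d} < \cdots < a_{n!}$; iterating the transfer extends the increasing run leftwards by $d$ at each step, until one obtains $a_{n!-n+2} < \cdots < a_{n!}$, i.e.\ $\sigma'_{n!-n+1} = 12\cdots(n-1)$, contradicting your step (a). Since your write-up contains no trace of this (or any substitute) argument, the proposal as it stands is not a proof; once this iteration argument is supplied, your counting route does go through and yields the theorem.
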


\begin{proof}
By Theorem~\ref{u-word-thm}, it suffices to prove that 
$$\red(a_{k}a_{k+1} \cdots a_{k+n-1}) = \red(a_{k}\cdots a_{n!} a_1 \cdots a_{k+n-n!-1})$$ 
for all $n!-n+2 \le k \le n!$.

First, note that $\sigma_i$ ends with $1$ for all $i < n$. 
Hence, we have that
$$\Pi'_{n,n-1} = n(n+1)\cdots(2n-2) (n-1)(n-2)\cdots1$$
and thus
$$a_k < a_1 < a_2 < \cdots < a_{n-1}$$
{for all} $k \ge n$.

Next, we show that $\sigma_i$ ends with $n$ for all $i \ge n!-n+2$.
Suppose that this is not true for some $i \ge n!-n+2$. 
Since $\Pi'_n$ is a u-word, we must have $\sigma'_{i-1} = \sigma'_{j-1}$ for some $j > i$. 
It follows from $\sigma_{n!} = 12\cdots n$ that 
$$a_{n!} < a_{n!+1} < \cdots < a_{n!+n-1}.$$
In particular, $$a_{n!} < a_{n!+1} < \cdots < a_{j+n-2}.$$
Then, $\sigma'_{i-1} = \sigma'_{j-1}$ implies that 
$$a_{n!+i-j} < a_{n!+i-j+1} < \cdots < a_{i+n-2}< \cdots < a_{n!+n-1}.$$
Iterating this argument gives that $a_{j} < a_{j+1} < \cdots < a_{n!+n-1}$ and thus $a_{i} < a_{i+1} < \cdots < a_{n!+n-1}$, contradicting the assumption $\sigma_i$ does not end with~$n$.
Therefore, $\sigma_i$ ends with $n$ for all $i \ge n!-n+2$, which implies that 
$$a_k < a_{n!+1} < a_{n!+2} < \cdots < a_{n!+n-1}$$
for all $n!-n+2 \le k \le n!$.
It follows that 
$$\sigma_{k} = \red(a_{k}a_{k+1} \cdots a_{n!}) (n!-k+2) \cdots n = \red(a_{k}\cdots a_{n!} a_1 \cdots a_{k+n-n!-1})$$ 
for $n!-n+2 \le k \le n!$, which proves the theorem. 
\end{proof}

\subsection{Uniqueness of $\Pi_n$}\label{unique-sub}

The construction of $\Pi_n$ begins with the permutation $12\cdots  (n-1)$ and is followed by consecutive applications of the greedy algorithm choosing the smallest possible element to extend the already constructed sequence. A~natural question is what happens if we start with a different permutation of length $n-1$ and follow the greedy steps of our algorithm? Can we obtain other (non-equivalent) constructions of u-cycles for $n$-permutations? The following theorem answers this question.

\begin{thm}\label{uniqueness-thm} 
Beginning the construction of $\Pi_n$ {\rm (}and thus of $\Pi'_n${\rm )} with an $(n-1)$-permutation $\Pi'_{n,0}\neq 12\cdots (n-1)$ and following the steps of the greedy algorithm will {\em not} produce a $u$-cycle for $n$-permutations. 
\end{thm}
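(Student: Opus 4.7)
The plan is to recast the greedy construction as a walk in the transition multigraph $G$ whose vertices are the $(n-1)$-permutations and whose edges are the $n$-permutations: an $n$-permutation $\pi$ is regarded as an edge from $\red(\pi_1\cdots\pi_{n-1})$ to $\red(\pi_2\cdots\pi_n)$. A u-word then corresponds to an Eulerian trail in $G$, and the greedy rule states that, at each visit to a vertex $\tau$, one leaves along the extension of $\tau$ of smallest index that has not yet been used; in particular, across the successive visits to $\tau$ the algorithm uses the extensions in the order $1,2,\ldots,n$. A slight variant of the argument of Lemma~\ref{lem2}, valid for any starting vertex $v$, shows that the algorithm terminates at step $k^*$ exactly when $\sigma'_{k^*}=v$ and all $n$ out-edges of $v$ have been used: for any pattern $\tau\ne v$ the positions $j\le k$ with $\sigma'_j=\tau$ correspond to distinct $n$-permutations $\sigma_j$ with tail $\tau$ (of which there are only $n$), so $J_k\le n$ and termination cannot occur at such a $\tau$.

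Now consider the vertex $v^\ast:=12\cdots(n-1)$. The permutation $12\cdots n$ has both head and tail reducing to $12\cdots(n-1)$, so it is a self-loop at $v^\ast$ in $G$. Moreover, $12\cdots n$ is obtained from $v^\ast$ by appending the letter $n$, so it is the $n$-th (largest) extension of $v^\ast$. By the greedy rule, this self-loop is therefore the last out-edge of $v^\ast$ to be chosen; it can only be used on the $n$-th visit to $v^\ast$.

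Assume for contradiction that $\Pi'_{n,0}=v\ne v^\ast$ and that the greedy algorithm produces a u-word. Then all $n$ out-edges of $v^\ast$ are used, so $v^\ast$ occurs as $\sigma'_j$ at exactly $n$ positions $j_1<\cdots<j_n$ in $\{1,\ldots,k^*-1\}$, since both $\sigma'_0=v$ and $\sigma'_{k^*}=v$ differ from $v^\ast$. By the greedy rule, $\sigma_{j_i+1}$ is the $i$-th extension of $v^\ast$ for each $i$; in particular $\sigma_{j_n+1}=12\cdots n$ (the self-loop), whose tail is $v^\ast$. Hence $\sigma'_{j_n+1}=v^\ast$, producing an occurrence of $v^\ast$ strictly greater than $j_n$ and at most $k^*$, which forces $j_n+1=k^*$ and therefore $\sigma'_{k^*}=v^\ast$, contradicting $\sigma'_{k^*}=v\ne v^\ast$. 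Consequently the self-loop $12\cdots n$ can never be chosen, $\Pi'_n$ misses this permutation, and $\Pi_n$ is not a u-cycle. The only delicate point is identifying $12\cdots n$ as the unique self-loop at $v^\ast$ and observing that the greedy rule saves it for the final visit; once this is in place, the counting of occurrences of $v^\ast$ yields the contradiction at once.
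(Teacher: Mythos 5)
Your proof is correct, and at its core it runs on the same counting as the paper's, but it is packaged quite differently, so a comparison is worth making. The paper's proof is a two-line corollary of Lemma~\ref{lem2} and needs no u-word hypothesis: if $\sigma'_0 \neq 12\cdots(n-1)$, then whenever $\sigma'_k = 12\cdots(n-1)$, the positions $j \ge 1$ with $\sigma'_j = \sigma'_k$ correspond to distinct $n$-permutations ending in that pattern \emph{other than} $12\cdots n$ (which, by the first part of Lemma~\ref{lem2}, would already have halted the algorithm), and position $0$ no longer contributes; hence $J_k \le n-1$, the greedy rule never selects the $n$-th extension, and $12\cdots n$ is simply never covered. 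You instead prove a generalized termination lemma -- the greedy walk in the transition graph can get stuck only at its starting vertex $v$ -- observe that $12\cdots n$ is the self-loop at $v^\ast = 12\cdots(n-1)$ and its largest extension (so greedy defers it to the $n$-th visit), and then run a reductio from the u-word assumption: the return along the self-loop creates an occurrence of $v^\ast$ whose position is forced to equal $k^*$, contradicting $\sigma'_{k^*} = v$. Both arguments rest on the same in-degree bound (at most $n$ positions $j \ge 1$ with $\sigma'_j = \tau$, because the $\sigma_j$ are distinct permutations with tail $\tau$); your route buys a cleaner conceptual statement, valid for any seed, at the cost of a longer endgame. In fact your endgame can be compressed so that neither the u-word assumption about all of $S_n$ nor the location of termination is needed: if the self-loop were ever used, the positions $j_1 < \cdots < j_n < j_n+1$ would yield $n+1$ \emph{distinct} $n$-permutations ending with the pattern $12\cdots(n-1)$, and only $n$ exist -- so $12\cdots n$ is never covered, which is exactly the paper's direct conclusion.
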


\begin{proof} 
Assume that $\sigma'_0 \neq 12\cdots(n-1)$.
As in the proof of Lemma~\ref{lem2}, the greedy algorithm terminates at~$k$ if $\sigma_k = 12\cdots n$. 
Now, the second part of the proof of Lemma~\ref{lem2} shows that $J_k < n$ when $\sigma'_k = 12\cdots(n-1)$. 
Therefore, the permutation $12\cdots n$ is not covered by $\Pi'_n$, hence $\Pi'_n$ is not a u-word and $\Pi_n$ is not a u-cycle. 
\end{proof}

\section{Properties of $\Pi_n$ and $\Pi'_n$}\label{prop-sec}

In this section, we shall give some properties of $\Pi_n$ and $\Pi'_n$.
All our results discussed are stated in terms of $\Pi'_n$, since the same results for $\Pi_n$ will trivially follow from the definition of $\Pi_n$.

Our main result is that all permutations with $1$ before $n$ occur in the first half of $\Pi'_n$, and those with $n$ before $1$ occur in the second half. 
More precisely, consider the partition of the symmetric group $S_n = S_{1,n} \cup S_{n,1}$, where
\begin{align*}
S_{1,n} & = \{\pi_1\pi_2\cdots \pi_n \in S_n:\ i < j\ \mbox{for}\ \pi_i = 1,\ \pi_j = n\}, \\[6pt]
S_{n,1} & = \{\pi_1\pi_2\cdots \pi_n \in S_n:\ i < j\ \mbox{for}\ \pi_i = n,\ \pi_j = 1\}.
\end{align*}

\begin{thm} \label{properties-thm}
We have $a_{\frac{n!}{2}+1} = 1$, 
\begin{gather*}
\sigma_{\frac{n!}{2}} = n12\cdots(n-1), \quad \sigma_{\frac{n!}{2}+1} = 134\cdots n2, \quad \sigma_{\frac{n!}{2}+2} = 23\cdots (n-1)1n, \\
\left\{\sigma_k:\, k \le \frac{n!}{2}\right\} = S_{n,1} \quad \mbox{and} \quad \left\{\sigma_k:\, k > \frac{n!}{2}\right\} = S_{1,n}.
\end{gather*}
\end{thm}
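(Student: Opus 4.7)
The plan is to establish the partition $\{\sigma_k : k \le n!/2\} = S_{n,1}$ and $\{\sigma_k : k > n!/2\} = S_{1,n}$ by showing that greedy first traces an Eulerian circuit in an $S_{n,1}$-subgraph, then to pin down the three explicit permutations at the transition, and finally to locate value~$1$.

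Classify each $(n-1)$-permutation $\tau$ as \emph{Type~I} if $1$ precedes $n-1$ in $\tau$ and \emph{Type~II} otherwise. A direct analysis of where $1$ and $n$ land in the $i$-th extension of $\tau$ yields: the extension lies in $S_{n,1}$ iff $i=1$ (Type~I) or $i \in \{1, \dots, n-1\}$ (Type~II). Since greedy always picks the smallest unused $i$ and the $S_{n,1}$ extensions carry the smallest labels, greedy takes an $S_{n,1}$ edge whenever one is available at the current vertex. Introduce the red subgraph $G_{n,1}$ of the de Bruijn-like graph on $(n-1)$-permutations, keeping only edges whose $n$-permutation lies in $S_{n,1}$. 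A routine count gives red out-degree $=$ red in-degree $= 1$ at each Type~I vertex and $=n-1$ at each Type~II vertex, so $G_{n,1}$ is balanced with $n!/2$ edges. I would then verify strong connectivity and argue that the greedy walk starting at $12\cdots(n-1)$ traces an entire Eulerian circuit in $G_{n,1}$ over the first $n!/2$ steps: by a degree-balance argument, greedy can only become red-saturated at $12\cdots(n-1)$ (otherwise, red in-uses at some other vertex would exceed its red in-degree), so greedy only leaves $G_{n,1}$ upon closing a red circuit at the start vertex. Combined with Theorem~\ref{u-word-thm}, this gives $\{\sigma_k : k \le n!/2\} = S_{n,1}$ and consequently $\{\sigma_k : k > n!/2\} = S_{1,n}$.

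The explicit transition then follows. Since the circuit closes, $\sigma'_{n!/2} = 12\cdots(n-1)$; because $i=1$ was used at step~$1$, greedy picks $i=2$, giving $\sigma_{n!/2+1} = 134\cdots n \cdot 2$. The permutation $\sigma_{n!/2}$ has the form $m \cdot (\text{increasing tail})$, and the constraint $\sigma_{n!/2} \in S_{n,1}$ (i.e.\ $n$ before $1$) forces $m=n$, hence $\sigma_{n!/2} = n12\cdots(n-1)$. Similarly, $\sigma'_{n!/2+1} = 23\cdots(n-1)\cdot 1$ is Type~II whose extensions $i=1, \dots, n-1$ were all used in the red phase, so greedy picks $i=n$, giving $\sigma_{n!/2+2} = 23\cdots(n-1)\cdot 1 \cdot n$. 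For $a_{n!/2+1}=1$, I would track the global position of value~$1$ in $\Pi'_{n,k}$: this position changes only at steps where $b = 1$, which requires both $i=1$ and value~$1$ being in the current window; every $i=1$ step lies in the red phase by the partition, and a short argument using the greedy walk's first-visit pattern shows the last such step is $n!/2 - n + 2$, placing value~$1$ at position $(n!/2-n+2)+(n-1) = n!/2+1$.

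The main obstacle is proving that greedy actually traces a \emph{full} Eulerian circuit in $G_{n,1}$, not merely a closed red walk that returns to $12\cdots(n-1)$ with some red edges still unused elsewhere. The degree-balance argument shows greedy can only halt at $12\cdots(n-1)$ in the red phase, but this alone is not sufficient, since a greedy smallest-label rule can in general return to the start after a strict sub-Eulerian closed trail. One must therefore exploit the specific combinatorics of $G_{n,1}$ --- the Type~I/Type~II asymmetry and perhaps the functional-graph structure of the $i=1$ edges --- and possibly argue by induction on $n$ or via a direct invariant on which red edges remain unused.
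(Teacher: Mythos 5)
Your reduction of the theorem to a graph-theoretic claim is set up correctly as far as it goes: the Type~I/Type~II degree counts in the red subgraph $G_{n,1}$ are right, greedy does prefer red edges since they carry the smallest labels, and the balance argument showing that the red phase can only terminate at $12\cdots(n-1)$ is sound. But the claim you explicitly leave open --- that the red phase is a \emph{full} Eulerian circuit of $G_{n,1}$ rather than a premature closed trail --- is not a technical detail to be filled in; it is the entire content of the theorem. The difficulty is sharpened by the fact that $12\cdots(n-1)$ is Type~I with red in- and out-degree $1$, so the red phase ends at the \emph{first} return to the start vertex, and no balance or completeness argument rules out an early return: the unused red edges would form a balanced subgraph, and the walk could traverse them later in the non-red phase without contradicting either balance or the u-word property. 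Since the partition statement, the three displayed transition permutations, and $a_{n!/2+1}=1$ (whose ``last $b=1$ step'' argument you also only sketch) all rest on this unproven claim, the proposal does not establish the theorem.

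It is worth seeing how the paper closes exactly this gap, because it inverts the logic rather than proving Eulerianity of the red phase head-on. Defining $m$ as the first index with $\sigma_m$ starting with $1$ (in your language, the first return to the start vertex), the paper shows $\sigma_k \in S_{n,1}$ for all $k<m$ (Lemma~\ref{l:Sn1}, by induction --- this parallels your ``greedy prefers red'' step), and then proves the key Lemma~\ref{l:S1n}: no $S_{n,1}$ permutation can occur at any $k \ge m$. The mechanism is a forced-repetition argument: if some $\sigma_k \in S_{n,1}$ occurred with $k \ge m$, then iterating $(n-1)$-st extensions produces, still after position $m$, a permutation starting with $n$ and ending with $n-1$, then one ending with $n1$, and finally a second occurrence of $n12\cdots(n-1)$ --- contradicting unique coverage, since $\sigma_{m-1} = n12\cdots(n-1)$. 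With that in hand, Eulerianity of the red phase is free: by Theorem~\ref{u-word-thm} every permutation of $S_{n,1}$ must be covered somewhere, and it cannot be covered at or after $m$, so $m-1 = |S_{n,1}| = n!/2$. This ``propagate forward until a forced repeat'' invariant is precisely the structure-specific argument you correctly anticipated would be needed but did not supply; similarly, $a_m=1$ falls out directly (Lemma~\ref{l:am}) because the letter $1$ remains inside the sliding window until the first permutation starting with $1$ appears.
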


In particular, $\sigma_{\frac{n!}{2}} = n12\cdots(n-1)$ is the last occurrence of an $n$-permutation starting with~$n$, $\sigma_{\frac{n!}{2}+1} = 134\cdots n2$ is the first occurrence of an $n$-permutation starting with~$1$, and $\sigma_{\frac{n!}{2}+2} = 23\cdots (n-1)1n$ is the first occurrence of an $n$-permutation ending with~$n$.

For the proof of Theorem~\ref{properties-thm}, we use the following lemmas. 

\begin{lem} \label{l:am}
Let $m \ge 1$ be minimal such that $\sigma_m$ starts with $1$.
Then $a_m = 1$, and $\sigma_k$ does not end with $n$ for all $k \le m$. 
\end{lem}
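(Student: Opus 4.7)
The plan is to derive two structural observations about how the $i$-th extension shapes $\sigma_k$, and then use them to prove each conclusion separately. \emph{Observation A:} if the $i$-th extension is applied at step $k-1 \to k$ (so $E_{k-1} = i$), then the appended value $b$ has rank exactly $i$ in the new window $\{a_k,\ldots,a_{k+n-1}\}$, because under the shift $c_b$ the $i-1$ values in $\{a_k,\ldots,a_{k+n-2}\}$ strictly less than $b$ remain below $b$, while the remaining values get bumped strictly above $b$; in particular, $\sigma_k$ ends with $n$ if and only if $E_{k-1}=n$. \emph{Observation B:} the first element of the $i$-th extension of $\pi_1\cdots\pi_{n-1}$ equals $c_i(\pi_1)$, which equals $1$ if and only if $\pi_1 = 1$ and $i \ge 2$; hence $\sigma_k$ starts with $1$ if and only if $\sigma'_{k-1}$ starts with $1$ and $E_{k-1} \ge 2$.

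For the claim $a_m = 1$: the minimality of $m$ gives that $\sigma_k$ does not start with $1$ for $k < m$; since value $1$ is the global minimum of $\Pi'_n$, having $a_k = 1$ for any $k < m$ would make $\sigma_k$ start with $1$, so $a_k \ne 1$ for $k < m$. Thus the position $p$ of value $1$ in $\Pi'_n$ satisfies $p \ge m$. A position $p \in \{m+1,\ldots,m+n-1\}$ would place the global minimum $1$ inside $\sigma_m$'s window at a non-leading position, contradicting that $\sigma_m$ starts with $1$. To rule out $p \ge m + n$, I would track the position of value $1$ inductively through the algorithm: value $1$ moves only when a smallest extension is used with $1$ in the last $n-1$ positions of $\Pi'_{n,k-1}$, in which case it jumps to the new last position; an induction on $k \le m$ shows that this keeps value $1$ inside the last $n$ positions of $\Pi'_{n,k}$, forcing $p = m$.

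For the claim that $\sigma_k$ does not end with $n$ for $k \le m$: I would argue by contradiction. If such a $k$ exists, Observation A yields $E_{k-1} = n$, so $\sigma'_{k-1}$ has already occurred at $n-1$ indices $j_1 < \cdots < j_{n-1} < k-1$, with the $i$-th occurrence using extension $i$. When $\sigma'_{k-1}$ starts with $1$, Observation B gives that $\sigma_{j_i+1}$ starts with $1$ for every $i \ge 2$, producing indices strictly below $m$ where $\sigma$ starts with $1$ and contradicting the minimality of $m$. When $\sigma'_{k-1}$ does not start with $1$, no extension of $\sigma'_{k-1}$ starts with $1$, so one must combine the already-proved $a_j \ne 1$ for $j < m$ with a careful bookkeeping of which extensions of the non-$1$-leading $(n-1)$-patterns have been used up to step $k-1$.

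The hardest step is expected to be this last sub-case, where $\sigma'_{k-1}$ does not start with $1$ yet $E_{k-1}=n$ is being forced: Observation B cannot produce the needed contradiction, so a finer analysis of how the shifts $c_{b_j}$ interact with the position of value $1$ and with the counts of each $(n-1)$-pattern as $\sigma'_j$ will be required. The inductive tracking of value $1$'s position (used in the argument for $a_m = 1$) is likely to be the other delicate point.
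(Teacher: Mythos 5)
Your Observations A and B are correct, and your treatment of the first claim is sound: tracking the value $1$ through the algorithm (it can only jump to the last position under a smallest extension, it cannot drift out of the last $n$ positions of $\Pi'_{n,k}$ for $k\le m$ without creating a window starting with $1$ before index $m$, and once $\sigma_m$ starts with $1$ the value $1$ sits at position $m$ and is never relabelled again) is exactly the content behind the paper's one-line remark that $a_m=1$ is ``a direct consequence of the greedy algorithm''; your version is more explicit than the paper's.

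The second claim, however, has a genuine gap, which you yourself flag: your argument only works when $\sigma'_{k-1}$ starts with $1$, and no amount of ``bookkeeping of used-up extensions'' will rescue the other case, because right-extensions of a pattern not starting with $1$ never start with $1$, so they can never contradict the minimality of $m$. The missing idea is to look at a different family of windows. From $E_{k-1}=n$ (your Observation A, the paper's $J_{k-1}=n$) you get $n$ indices $j\le k-1$ with $\sigma'_j=\sigma'_{k-1}$; instead of the windows $\sigma_{j+1}$ (right-extensions of $\sigma'_{k-1}$), consider the windows $\sigma_j$ themselves. Each $\sigma_j$ is an $n$-permutation whose last $n-1$ letters reduce to $\sigma'_{k-1}$, and such a permutation is completely determined by the rank of its first letter, so there are exactly $n$ of them. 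Since the greedy algorithm never repeats a window pattern, the $n$ windows $\sigma_j$ are pairwise distinct and therefore exhaust all $n$ possible first-letter ranks; in particular some $\sigma_j$ with $j\le k-1<m$ starts with $1$, contradicting the minimality of $m$ --- uniformly, with no case distinction on $\sigma'_{k-1}$. (Two details this requires: all these $j$ must satisfy $j\ge 1$ so that $\sigma_j$ is an actual window, which the paper secures by noting that $j=0$ would force $\sigma_k=12\cdots n$, excluded since $12\cdots n$ starts with $1$ and occurs only at termination; and the distinctness of the $\sigma_j$ is precisely the invariant maintained by the greedy algorithm.) This pigeonhole on windows ending with the pattern $\sigma'_{k-1}$, rather than on extensions of it, is the step your proposal is missing.
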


\begin{proof}
The fact that $a_m = 1$ is a direct consequence of the greedy algorithm. 
Suppose that $\sigma_k$ ends with $n$ for some $k \le m$. 
Then we have $\sigma'_j = \sigma'_{k-1}$ for $n$ different indices $j < k$. 
Since $\sigma_k \ne 12\cdots n$, we have that $j \ge 1$ for all these~$j$.  Hence, there are $n$ different permutations $\sigma_j$ ending with~$\sigma'_{k-1}$. 
Therefore, one of these permutations starts with $1$, contradicting the definition of~$m$.
\end{proof}

\begin{lem} \label{l:Sn1}
Let $m$ be as in Lemma~\ref{l:am}.
We have $\sigma_k \in S_{n,1}$ for all $k < m$, $\sigma_{m-1} = n12\cdots(n-1)$ and $\sigma_m = 134\cdots n2$.
\end{lem}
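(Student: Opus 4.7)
The plan is to use Lemma~\ref{l:am} throughout (which supplies $a_m=1$, the fact that $\sigma_k$ never ends with $n$ for $k\le m$, and the fact that $\sigma_k$ never starts with $1$ for $k<m$) and to establish the three conclusions in order: first the membership $\sigma_k \in S_{n,1}$ for $k<m$ by induction on $k$; then the explicit form of $\sigma_{m-1}$; and finally the form of $\sigma_m$ via one application of the greedy rule.

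For the induction, the base case $\sigma_1 = 2\,3\cdots n\,1 \in S_{n,1}$ is immediate. For the step I would classify where the symbols $1$ and $n$ land in the $i$-th extension of an arbitrary $(n-1)$-permutation $\pi$: the 1st extension places $1$ at the last position and so is automatically in $S_{n,1}$; the $n$-th extension places $n$ at the last position and is forbidden by Lemma~\ref{l:am}; and for $2\le i\le n-1$ the positions of $1$ and $n$ in the extension are inherited from the positions of $1$ and $n-1$ in $\pi$, so the extension lies in $S_{n,1}$ iff $n-1$ precedes $1$ in $\pi$. The inductive step thus reduces to the sub-claim that whenever, at step $k+1\le m-1$, the greedy rule is forced to choose $J_k\ge 2$, the permutation $\sigma'_k$ has $n-1$ before $1$. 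I expect this to be the main obstacle. The condition $J_k\ge 2$ requires that the 1st extension of $\sigma'_k$ was used at an earlier step $j+1$ with $\sigma'_j = \sigma'_k$, and I would prove the sub-claim by tracking the window transition $\sigma'_j \mapsto \sigma'_{j+1}$ induced by the 1st extension (which shifts the entries of $\sigma'_j$ up by one and appends~$1$) together with the inductive hypothesis for indices between $j+1$ and $k$ and Lemma~\ref{l:am}. If this direct approach proves too delicate, a fall-back is a counting argument using that the injection $k \mapsto \sigma_k$ lands in the complement of ``starts with $1$'' $\cup$ ``ends with $n$'', and that the pinned position of $a_m=1$ cuts the image down further.

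With the induction in place, since $\sigma_{m-1} \in S_{n,1}$ and $a_m=1$ occupies window position $2$ of $\sigma_{m-1}$, the symbol $1$ of $\sigma_{m-1}$ sits at position $2$, and Lemma~\ref{l:am} forces the symbol $n$ to position $1$. To finish the identification $\sigma_{m-1} = n\,1\,2\,3\cdots(n-1)$, one must show $a_{m+1}<a_{m+2}<\cdots<a_{m+n-2}$. I would derive this by reading off the required comparisons from $\sigma_{m-n+2}, \sigma_{m-n+3}, \ldots, \sigma_{m-2}$: each of these is in $S_{n,1}$ by the induction and contains the pinned $a_m=1$ at a known window position, so the $S_{n,1}$ and end-not-$n$ constraints applied to these shorter windows fix the relative order of $a_{m+1}, \ldots, a_{m+n-2}$ one comparison at a time. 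Consequently $\sigma'_{m-1} = 1\,2\cdots(n-1)$; its 1st extension is $2\,3\cdots n\,1 = \sigma_1$ (already covered), while its 2nd extension $1\,3\,4\cdots n\,2$ starts with $1$ and so, by the minimality of $m$, cannot equal any $\sigma_j$ with $j<m$. The greedy rule therefore selects it, yielding $\sigma_m = 1\,3\,4\cdots n\,2$.
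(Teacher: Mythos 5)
Your induction set-up, base case, and classification of where $1$ and $n$ land in each extension are sound and agree with the paper's case split ($\sigma'_k \in S_{n-1,1}$ versus $\sigma'_k \in S_{1,n-1}$), but the proof has a genuine gap exactly at the sub-claim you flag as ``the main obstacle'' and never prove: that $J_k \ge 2$ with $k+1 \le m-1$ forces $\sigma'_k \in S_{n-1,1}$. The idea that closes it, and which neither of your sketches reaches, is a pinning argument: if $\sigma_k \in S_{n,1}$ while $\sigma'_k \in S_{1,n-1}$, then the maximum of the window $a_k\cdots a_{k+n-1}$ must be $a_k$ itself (the minimum cannot be $a_k$, and if both extremes sat in the last $n-1$ positions we would have $\sigma'_k \in S_{n-1,1}$); hence $\sigma_k$ equals $n$ followed by $\sigma'_k$, i.e.\ $\sigma_k$ is completely determined by $\sigma'_k$. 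Since factors of $\Pi'_{n,k}$ are distinct, at most one index $j \ge 1$ can satisfy $\sigma'_j = \sigma'_k$, so $J_k \le 1$ unless $\sigma'_k = \sigma'_0$; and in that last case the greedy step is an $i$-th extension of $12\cdots(n-1)$ with $i \ge 2$, which starts with $1$ and is therefore possible only for $k = m-1$. Your ``window-transition tracking'' is not developed into an argument, and your counting fall-back, even strengthened by the inductive hypothesis (only the first extension of an $S_{1,n-1}$ permutation lies in $S_{n,1}$), yields at best $J_k \le 2$: the problematic case $J_k = 2$ with $\sigma'_k \ne \sigma'_0$ would make $\sigma_{k+1}$ the second extension of $\sigma'_k$, which lies in $S_{1,n}$ but, when $\sigma'_k$ does not start with $1$, neither starts with $1$ nor ends with $n$, so it is excluded neither by Lemma~\ref{l:am} nor by the minimality of $m$ --- only by the pinning argument.

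The second gap is your identification of $\sigma_{m-1}$. You correctly note that what is needed is $a_m < a_{m+1} < \cdots < a_{m+n-2}$, but you propose to read these comparisons off from the windows $\sigma_{m-n+2},\ldots,\sigma_{m-2}$ using only their membership in $S_{n,1}$, the not-ending-with-$n$ condition, and the pinned letter $a_m=1$. This cannot work: since $a_m$ is the global minimum, membership of those windows in $S_{n,1}$ says only that each window's maximum occurs to the left of position $m$, which makes letters before $a_m$ dominate letters after it but imposes no relation among $a_{m+1},\ldots,a_{m+n-2}$ themselves. Concretely for $n=4$ (using the paper's $\Pi_4$, whose letters have the same relative order as those of $\Pi'_4$) one has $a_{12}=16$, $a_{13}=1$, $a_{14}=6$, $a_{15}=7$, and the constraints you cite amount to $a_{14}<a_{12}$ and $a_{15}<a_{12}$ only; the fact that $a_{14}<a_{15}$ is decided by the greedy dynamics (the algorithm is forced into a \emph{third} extension there, the first and second extensions of $\red(a_{12}a_{13}a_{14})=312$ having already been used), not by those order constraints. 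The paper instead obtains the increasing run for free from the case analysis at $k=m-1$: since $\sigma_m$ starts with $1$ and every case other than $\sigma'_{m-1}=\sigma'_0$ would put $\sigma_m$ in $S_{n,1}$, necessarily $\sigma'_{m-1}=12\cdots(n-1)$, whence $\sigma_{m-1}=n12\cdots(n-1)$. Your final step (the greedy rule picks the second extension of $12\cdots(n-1)$, giving $\sigma_m=134\cdots n2$) is correct once that identity is in place; note also the small slip that it is $\sigma_{m-1}\in S_{n,1}$, not Lemma~\ref{l:am}, that forces $n$ into the first position of $\sigma_{m-1}$.
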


\begin{proof}
We use induction on~$k$. 
We have $\sigma_1 = 23\cdots n1 \in S_{n,1}$. 
Assume that $\sigma_j \in S_{n,1}$ for all $j \le k$, where $1 \le k < m$.
If $\sigma'_k \in S_{n-1,1}$, then we have $\sigma_{k+1} \in S_{n,1}$ because $\sigma_{k+1}$ does not end with~$n$ by Lemma~\ref{l:am}.
Now, assume that $\sigma'_k \in S_{1,n-1}$.
Since $\sigma_k \in S_{n,1}$, $\sigma_k$ starts with~$n$.
Moreover, we cannot have $\sigma'_j = \sigma'_k$ for $1 \le j < k$,
since otherwise it implies that $\sigma'_j \in S_{1,n-1}$.  Together with  $\sigma_j \in S_{n,1}$, it follows that
 $\sigma_j$ would also have to start with $n$,  contradicting that $\sigma_j \ne \sigma_k$. 
If $\sigma'_k \ne \sigma'_0$, then this implies that $\sigma_{k+1}$ ends with~$1$ and hence $\sigma_{k+1} \in S_{n,1}$. 
Finally, if $\sigma'_k = \sigma'_0$, then $\sigma_{k+1}$ is an $i$-th extension of $12\cdots(n-1)$ with $i \ge 2$, thus $\sigma_{k+1}$ starts with~$1$, which is possible only for $k=m-1$. 
Therefore, we have $\sigma_k \in S_{n,1}$ for all $k < m$.

We have $\sigma_m \in S_{1,n}$ because it starts with~$1$, hence we obtain from the previous paragraph that $\sigma'_{m-1} = \sigma'_0$, $\sigma_{m-1} = n12\cdots(n-1)$ and $\sigma_m = 134\cdots n2$.
\end{proof}

\begin{lem} \label{l:S1n}
Let $m$ be as in Lemma~\ref{l:am}.
We have $\sigma_k \in S_{1,n}$ for all $k \ge m$, with $\sigma_{m+1} = 23\cdots (n-1)1n$.
\end{lem}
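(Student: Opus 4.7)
My plan is to prove Lemma~\ref{l:S1n} by strong induction on $k \ge m$, mirroring closely the proof structure of Lemma~\ref{l:Sn1}. The base case $k = m$ is immediate from Lemma~\ref{l:Sn1}, and a crucial ingredient will be an analog of Lemma~\ref{l:am} for the second half of $\Pi'_n$.

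First I would compute $\sigma_{m+1}$ explicitly. Since $\sigma_m = 134\cdots n 2$, we have $\sigma'_m = \red(34\cdots n 2) = 23\cdots(n-1)1 \in S_{n-1,1}$. A direct enumeration shows that the $i$-th extensions of $\sigma'_m$ for $i = 1, \ldots, n-1$ are specific permutations in $S_{n,1}$ (each has $n$ in position $n-2$), while the $n$-th extension is $23\cdots(n-1)1n \in S_{1,n}$. The key step is establishing $J_m = n$, forcing the greedy to pick the $n$-th extension and yielding $\sigma_{m+1} = 23\cdots(n-1)1n$; to justify this I would argue that each of the $n-1$ permutations in $S_{n,1}$ whose last $n-1$ letters reduce to $\sigma'_m$ must already lie in $\{\sigma_1, \ldots, \sigma_{m-1}\}$.

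For the inductive step, assume $\sigma_j \in S_{1,n}$ for all $m \le j \le k$ and split on $\sigma'_k$. If $\sigma'_k \in S_{n-1,1}$, then $\sigma_k \in S_{1,n}$ forces $\sigma_k$ to start with $1$: a start of $n$ would place $\sigma_k$ in $S_{n,1}$, while any start in $\{2,\ldots,n-1\}$ would make the $S_{1,n}$-status of $\sigma_k$ equivalent to the $S_{1,n-1}$-status of $\sigma'_k$, contradicting the hypothesis. Distinctness in the u-word then rules out $\sigma'_j = \sigma'_k$ for any $j \in \{m,\ldots,k-1\}$, so the remaining occurrences of $\sigma'_k$ at indices $\le k$ lie in the first half and correspond, by Lemma~\ref{l:Sn1}, to the $n-1$ $S_{n,1}$-extensions of $\sigma'_k$. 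This yields $J_k = n$, and the greedy picks the $n$-th extension, giving $\sigma_{k+1} = \sigma'_k \cdot n \in S_{1,n}$. If instead $\sigma'_k \in S_{1,n-1}$, I would invoke the analog of Lemma~\ref{l:am} --- namely, that $\sigma_k$ does not end with $1$ for $k > m$ --- to rule out the $1$st extension of $\sigma'_k$ and conclude $\sigma_{k+1} \in S_{1,n}$.

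The main obstacle I anticipate is establishing both $J_m = n$ and the analog of Lemma~\ref{l:am} without circularity, since each essentially asserts that the first $m-1$ entries of $\Pi'_n$ exhaust $S_{n,1}$ --- a statement equivalent to Lemma~\ref{l:S1n} itself. I plan to break this apparent circularity by mirroring the structural argument of Lemma~\ref{l:am}: supposing $\sigma_k$ ends with $1$ for some $k > m$ makes $\sigma_k$ the $1$st extension of $\sigma'_{k-1}$ and forces $J_{k-1} = 1$; by analyzing the $n$ permutations sharing $\sigma'_{k-1}$ as their ending pattern (one for each first letter in $\{1,\ldots,n\}$) together with the positions already dictated by Lemma~\ref{l:Sn1} and the terminal identity $\sigma_{n!} = 12\cdots n$, I expect to force two distinct $\sigma_j$'s to coincide in first letter and ending pattern, contradicting u-word distinctness.
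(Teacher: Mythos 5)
Your outline correctly isolates where the difficulty lies, but it does not overcome it: the argument is circular at exactly the points you flag, and the mechanism you sketch for breaking the circularity does not work. Concretely, three of your steps --- $J_m = n$ in the base case, $J_k = n$ in the inductive case $\sigma'_k \in S_{n-1,1}$, and the ``$\sigma_k$ never ends with $1$ for $k > m$'' analog of Lemma~\ref{l:am} --- all require knowing that every permutation of $S_{n,1}$ whose last $n-1$ letters reduce to the relevant pattern has already occurred \emph{before} position~$m$. Lemma~\ref{l:Sn1} cannot supply this: it says that everything occurring before $m$ lies in $S_{n,1}$, not that every (relevant) member of $S_{n,1}$ occurs before $m$; a priori those permutations could occur after~$k$, leaving $J_k < n$, in which case the greedy step lands $\sigma_{k+1}$ in $S_{n,1}$ and the induction collapses. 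So the sentence ``the remaining occurrences \dots correspond, by Lemma~\ref{l:Sn1}, to the $n-1$ $S_{n,1}$-extensions'' is an unjustified leap, not a consequence of that lemma. Note also that the Lemma~\ref{l:am} analog, even if granted, would not save the case $\sigma'_k \in S_{n-1,1}$: any value $J_k \in \{2,\dots,n-1\}$ gives $\sigma_{k+1} \in S_{n,1}$ without $\sigma_{k+1}$ ending in~$1$, so that case genuinely needs $J_k = n$.

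As for the proposed escape: from ``$\sigma_k$ ends with $1$, $k > m$'' you correctly deduce $J_{k-1} = 1$, hence all $n$ permutations with ending pattern $\sigma'_{k-1}$ occur at positions $\ge k-1 \ge m$. But these $n$ permutations have pairwise distinct first letters, so no two of them can ever be forced to ``coincide in first letter and ending pattern''; and neither Lemma~\ref{l:Sn1} nor the identity $\sigma_{n!} = 12\cdots n$ constrains the positions $m < j < n!$, which is precisely the region at issue. No contradiction materializes from the ingredients you list. The paper's proof is genuinely different and non-inductive: it assumes some $\sigma_k \in S_{n,1}$ with $k \ge m$ and propagates it \emph{forward}. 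Counting first letters (no $\sigma_j$ with $j < m$ starts with $1$, and $\sigma_k$ itself sits at position $\ge m$) shows at most $n-2$ of the permutations ending with pattern $\sigma'_k$ occur before $m$, so the $(n-1)$-st extension of $\sigma'_k$ occurs after~$m$; iterating this shifts the letter $n$ leftwards until some $\sigma_i$ with $i \ge m$ starts with $n$ and ends with $n-1$; its first extension then ends with $n1$ and occurs after~$m$; iterating $(n-1)$-st extensions once more produces an occurrence of $n12\cdots(n-1)$ after~$m$, contradicting the fact that this permutation already occurred at position $m-1$ (Lemma~\ref{l:Sn1}) and, by construction, no pattern occurs twice. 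Some such propagation-plus-uniqueness argument is the missing idea in your plan.
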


\begin{proof}
Suppose that $\sigma_k \in S_{n,1}$ for some $k \ge m$.
Then, we will show that, for some $i \ge m$, $\sigma_i$ starts with $n$ and ends with $n-1$.
We obtain that, for some $i \ge m$, $\sigma_i$ ends with $n1$, and finally this yields that $n12\cdots (n-1)$ occurs after~$m$, contradicting that $\sigma_{m-1} = n12\cdots(n-1)$.

First, note that $\sigma'_k = \sigma'_0$ and $\sigma_k \in S_{n,1}$ are possible only for $k = m-1$.
Therefore, $\sigma_k \in S_{n,1}$ for $k \ge m$ implies that $\sigma'_j = \sigma'_k$ for at most $n$ different indices $j \le k$.
Moreover, since $\sigma_j$ cannot start with~$1$ if $j < m$ or $j = k$, and since $k \ge m$, there are at most $n-2$ different indices $j < m$ such that $\sigma'_j = \sigma'_k$.
Hence the $(n-1)$-st extension of $\sigma'_k$ occurs after~$m$.

If $\sigma_k \in S_{n,1}$ does not start with~$n$, then its $(n-1)$-st extension is also in~$S_{n,1}$, with the position of $n$ shifted to the left; we obtain iteratively that there is some $i \ge m$ such that $\sigma_i$ starts with $n$ and ends with $n-1$. 
On the other hand, if $\sigma_k$ starts with~$n$, then the $(n-1)$-st extension of $\sigma'_{k-1}$ starts with $n$ and occurs at or after the position~$k$. 

Therefore, we can assume that, for some $k \ge m$, $\sigma_k$ starts with $n$ and ends with $n-1$.
Now, we cannot have $\sigma'_j = \sigma'_k$ for some $1 \le j < m$, because $\sigma_j \ne \sigma_k$ would imply that $\sigma_j$ ends with $n$, which is not possible for $j < m$.
Since $\sigma'_k \ne \sigma'_0$, we obtain that $\sigma'_k$ does not occur before~$m$, hence the first extension of $\sigma'_k$, which ends with $n1$, occurs after~$m$. 

Finally, assume that $\sigma_k$ ends with $n1$ for some $k \ge m$.
Then, iterating $(n-1)$-st extensions gives that $n12\cdots(n-1)$ occurs after~$m$, contradicting that $\sigma_{m-1} = n12\cdots(n-1)$. 
Hence, we cannot have $\sigma_k \in S_{n,1}$ for $k \ge m$.

Since $\sigma'_m = 23\cdots (n-1)1$ by Lemma~\ref{l:Sn1} and $\sigma_{m+1} \in S_{1,n}$, we have $\sigma_{m+1} = 23\cdots (n-1)1n$.
\end{proof}

\begin{proof}[Proof of Theorem~\ref{properties-thm}]
Since $|S_{1,n}| = |S_{n,1}| = n!/2$, the theorem is a direct consequence of Lemmas~\ref{l:Sn1} and~\ref{l:S1n}, with $m =  n!/2+1$.
\end{proof}

Note that, since $a_{\frac{n!}{2}+1} = 1$, we have, for all $k \le  n!/2-n+2$, some $0 \le j \le n-2$ such that $\sigma_{k+j}$ ends with~$1$.
This also follows from the fact that $\sigma_k \in S_{n,1}$ for all $k \le  n!/2$.
Similarly, since $\sigma_k \in S_{1,n}$ for all $k >  n!/2$, we have, for all $k >  n!/2$, some $0 \le j \le n-2$ such that $\sigma_{k+j}$ ends with~$n$.
Therefore, $a_{n!+n-1}$ is the largest element to the right of $a_{\frac{n!}{2}+1} = 1$.

\section{Directions of further research}\label{further-res-sec}

\subsection{Reducing the alphabets of $\Pi'_n$ and $\Pi_n$}\label{reducing-sec}

Our main goal in this paper was to come up with a {\em simple} construction of a u-cycle for permutations, {\em not} a construction requiring the {\em minimum} number of letters, which was a concern in, say~\cite{CDG1992,HRW2012,Johnson2009}. However, it is an interesting and challenging question to ask what is the minimum number of distinct letters that is required for our greedy construction.  These considerations will be similar in nature to dealing with partially ordered sets in \cite{CDG1992}.

For example, constructing $\Pi'_3$ in a greedy way with respect to the number of letters used, we end up with a word over a 6-letter alphabet, not over an 8-letter alphabet: 
\begin{align*}
 12  & \rightarrow
 231 \rightarrow
 3421 \rightarrow
  45312 \rightarrow
 564132 \rightarrow  
 5641324 \rightarrow
 56413245.
\end{align*}
Such an optimization is irrelevant for the construction of $\Pi_3$, but it would be relevant for the construction of $\Pi_n$ for $n\geq 4$. So, the idea is that in the original construction of $\Pi'_n$ some of the letters can be used more than once. This defines a partial order on the letters of the originally constructed $\Pi'_n$ indicating which letters have a choice to be used more than once. More precisely, the poset is defined by $a_j \prec a_k$ if $a_j < a_k$ and $|k-j|<n$, where $\prec$ denotes the poset relation. Then the minimal number of letters is the length of this partial order. Such partially ordered sets, in the cases of $n=3,4$, are shown in Figure~\ref{posets-n=3-4}, and we can see that $\Pi'_3$ (resp., $\Pi'_4$) actually requires 6 (resp., 13) distinct letters.  It would be interesting to study properties of such partially ordered sets. For example, from the case of $n=4$, we see that such partially ordered sets are not necessarily graded. 

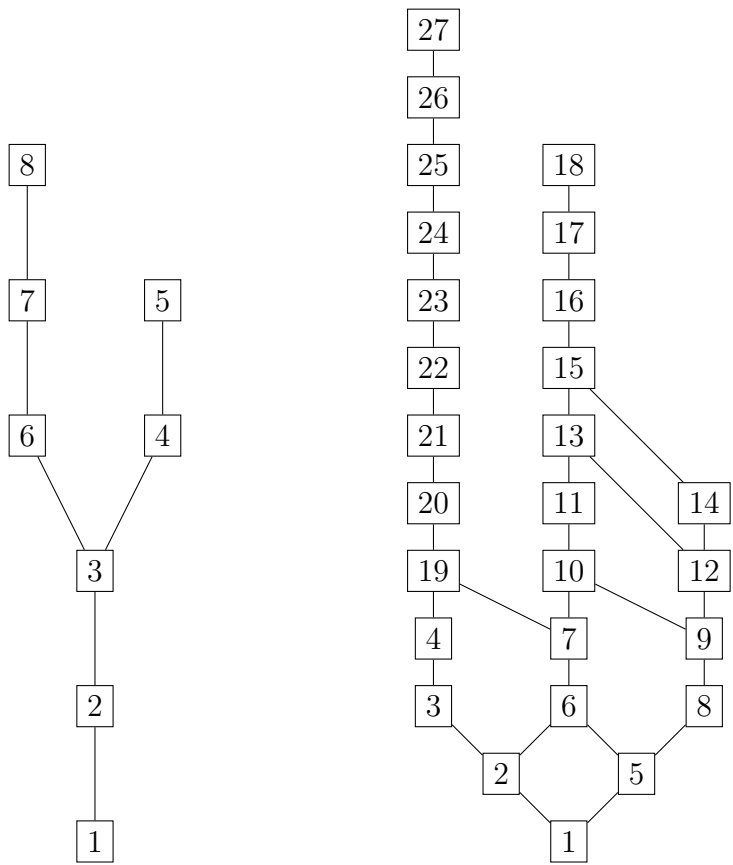
\begin{figure}[ht]
\centering
\begin{tikzpicture}[scale=.9]
\node (1) at (0,0) [draw] {$1$};
\node (2) at (0,2) [draw] {$2$};
\node (3) at (0,4) [draw] {$3$};
\node (4) at (1,6) [draw] {$4$};
\node (5) at (1,8) [draw] {$5$};
\node (6) at (-1,6) [draw] {$6$};
\node (7) at (-1,8) [draw] {$7$};
\node (8) at (-1,10) [draw] {$8$};
\draw (1)--(2)--(3)--(4)--(5) (3)--(6)--(7)--(8);

\begin{scope}[shift={(7,0)}]
\node (1) at (0,0) [draw] {$1$};
\node (2) at (-1,1) [draw] {$2$};
\node (3) at (-2,2) [draw] {$3$};
\node (4) at (-2,3) [draw] {$4$};
\node (5) at (1,1) [draw] {$5$};
\node (6) at (0,2) [draw] {$6$};
\node (7) at (0,3) [draw] {$7$};
\node (8) at (2,2) [draw] {$8$};
\node (9) at (2,3) [draw] {$9$};
\node (10) at (0,4) [draw] {$10$};
\node (11) at (0,5) [draw] {$11$};
\node (12) at (2,4) [draw] {$12$};
\node (13) at (0,6) [draw] {$13$};
\node (14) at (2,5) [draw] {$14$};
\node (15) at (0,7) [draw] {$15$};
\node (16) at (0,8) [draw] {$16$};
\node (17) at (0,9) [draw] {$17$};
\node (18) at (0,10) [draw] {$18$};
\node (19) at (-2,4) [draw] {$19$};
\node (20) at (-2,5) [draw] {$20$};
\node (21) at (-2,6) [draw] {$21$};
\node (22) at (-2,7) [draw] {$22$};
\node (23) at (-2,8) [draw] {$23$};
\node (24) at (-2,9) [draw] {$24$};
\node (25) at (-2,10) [draw] {$25$};
\node (26) at (-2,11) [draw] {$26$};
\node (27) at (-2,12) [draw] {$27$};
\draw (1)--(2)--(3)--(4)--(19)--(20)--(21)--(22)--(23)--(24)--(25)--(26)--(27) (1)--(5)--(6)--(7)--(19) (2)--(6) (5)--(8)--(9)--(10)--(11)--(13)--(15)--(16)--(17)--(18) (7)--(10) (9)--(12)--(13) (12)--(14)--(15);
\end{scope}
\end{tikzpicture}

 	\caption{Partially ordered sets for $\Pi'_3$ and $\Pi'_4$}\label{posets-n=3-4}
 \end{figure}

We note that our greedy algorithm will never result in the optimum size of the alphabet, which is $n+1$ for $n$-permutations, because of the first $n$ steps of the algorithm when we are forced to use new letters, so the total number of letters will be $\geq 2n-2>n+1$ for $n\geq 4$. However, to find the exact optimal size for our construction is an interesting problem requiring understanding the partially ordered sets in question. 

If constructing a universal cycle for $n$-permutations over an $(n+1)$-letter alphabet is someone's main goal, it would be interesting to see if there is a (natural) modification of our algorithm that would help to achieve this goal (thus reproving the result of Johnson~\cite{Johnson2009}). Such a modification would be in describing (in a ``nice'' way) a sequence of extensions instead of applying the minimal possible extension every time. For example, we might be interested in alternation of applications of minimal and maximal extensions, or applying some sort of closest to average extension, etc. Of course, many, if not (almost) all such modifications will not result in a universal cycle. This effectively opens up the question on classification of sequences of extensions that result in universal cycles (leaving aside the question on the number of letters used by a particular universal cycle obtained this way).

\subsection{Shortened universal words/cycles for permutations}\label{sec-shortened}

It would be interesting to extend our approach to generate universal cycles for permutations in the context of shortened universal words/cycles for permutations that were introduced in \cite{KPV2017}. 

To illustrate the idea of one of the two ways suggested in  \cite{KPV2017} to shorten universal cycles for permutations, consider the word $112$, which is claimed  to be a universal cycle for all permutations of length 3, thus shortening a ``classical'' universal cycle for these permutations, say, $145243$. Indeed, we can treat equal elements as {\em incomparable elements}, while the relative order of these incomparable elements to the other elements must be respected. Thus, $112$ encodes all permutations whose last element is the largest one, namely, $123$ and $213$; starting at the second position (and reading the word cyclically), we obtain the word $121$ encoding the permutations $132$ and $231$, and finally, starting at the third position, we (cyclically) read the word $211$ encoding the permutations $312$ and $321$.  For another example, the word $1232$ is also a universal cycle for permutations of length $3$. 

The main goal of \cite{KPV2017} is to study compression possibilities for (classical) universal cycles for permutations. In particular, \cite{KPV2017} shows that such universal cycles exist of lengths $n!-kn$ for $k=0,1,\ldots,(n-1)!$. The approach in \cite{KPV2017} to obtain these results is graph theoretical, and it actually does not provide any explicit constructions of the objects in the general case. Thus, (a~modification of) our approach to generate universal cycles could potentially be useful in generating shortened universal words/cycles, and this is an interesting direction of further research. 
An idea here would be to introduce a rule, or rules, overriding the rules of the algorithm, while still beginning with the increasing sequence of length $(n-1)$ for $n$-permutations. For example, we could require to use the minimum letter equal to some letter among the $(n-1)$ preceding letters, if possible, while following our original algorithm otherwise. So that the steps of such a modified algorithm in the case of $n=3$ could be
$$12  \rightarrow
 121 \rightarrow
 1211 \rightarrow
  12112$$  
covering all 6 permutations non-cyclicly, or for the cyclic version, we can stop at 121 (132 and 231 are covered by 121,  321 and 312 by 211, and 123 and 213 by 112). However, correctness of such an algorithm, or its variations, must be addressed, which is outside of the scope in this paper.   

\section*{Acknowledgments}
The first author was supported by the Fundamental Research Funds for the Central Universities (31020170QD101). 
The second author is grateful to the administration of the Center for Combinatorics at Nankai University for their hospitality during the author's stay in May 2017. 
The last author was partially supported by the National Science Foundation of China (Nos. 11626172, 11701424).

\end{document}